\tikzset{
	vert/.style={circle,inner sep=1.5,fill=white,draw,minimum size=.3cm},
	edge/.style={color=black, thick},
	diredge/.style={->,>={Stealth[width=8pt,length=8pt]},color=black, thick},
	timelabel/.style={fill=white,font=\footnotesize, text centered},
	wave/.style={decorate,decoration={coil,aspect=0}},
	dirwave/.style={->, >={Stealth[width=8pt,length=8pt]},decorate,decoration={coil,aspect=0}},
	diredge2/.style={->,>={Stealth[width=8pt,length=8pt]}}
}
\crefname{claim}{Claim}{Claims}
\title{On the existence of $\delta$-temporal cliques in random simple temporal graphs} %TODO Please add
\author{George B. Mertzios}{Department of Computer Science, Durham University, UK}{george.mertzios@durham.ac.uk}{https://orcid.org/0000-0001-7182-585X}{Supported by the EPSRC grant EP/P020372/1.}
\author{Sotiris Nikoletseas}{Computer Engineering and Informatics Department, University of Patras, Greece}{nikole@cti.gr}{https://orcid.org/0000-0003-3765-5636}{}
\author{Christoforos Raptopoulos}{Department of Mathematics, University of Patras, Greece}{raptopox@upatras.gr}{https://orcid.org/0000-0002-9837-2632}{Supported by the Hellenic Foundation
for Research and Innovation (H.F.R.I.) under the “2nd Call for H.F.R.I.
Research Projects to support Post-Doctoral Researchers” (Project Number:
704).}
\author{Paul G. Spirakis}{Department of Computer Science, University of Liverpool, UK}{p.spirakis@liverpool.ac.uk}{https://orcid.org/0000-0001-5396-3749}{Supported by the EPSRC grant EP/P02002X/1.}
\authorrunning{George B. Mertzios, Sotiris Nikoletseas, Christofors Raptopoulos, and Paul G. Spirakis} %TODO mandatory. First: Use abbreviated first/middle names. Second (only in severe cases): Use first author plus 'et al.'
\keywords{Simple random temporal graph, $\delta$-temporal clique, probabilistic method} %TODO mandatory; please add comma-separated list of keywords
\begin{document}
\maketitle

\begin{abstract}
We consider random simple temporal graphs in which every edge of the complete graph $K_n$ appears once within the time interval $[0,1]$ independently and uniformly at random. Our main result is a sharp threshold on the size of  any maximum $\delta$-clique (namely a clique with edges appearing at most $\delta$ apart within $[0,1]$) in random instances of this model, for any constant~$\delta$. In particular, using the probabilistic method, we prove that the size of a maximum $\delta$-clique is approximately $\frac{2\log{n}}{\log{\frac{1}{\delta}}}$ with high probability (whp).~What seems surprising is that, 
even though the random simple temporal graph contains $\Theta(n^2)$ overlapping $\delta$-windows, which (when viewed separately) correspond to different random instances of the Erd\H{o}s-R\'{e}nyi random graphs model, the size of the maximum $\delta$-clique in the former model and the maximum clique size of the latter are approximately the same. Furthermore, we show that the minimum interval containing a $\delta$-clique is $\delta-o(\delta)$ whp. We use this result to show that any polynomial time algorithm for \textsc{$\delta$-Temporal Clique} is unlikely to have very large probability of success.  
% 
%\vspace{0,2cm}
%\noindent \textbf{Due to lack of space, the full paper with all proofs is attached in a 
%	clearly marked Appendix to be read at the discretion of the Program Committee.}
\end{abstract}

%\clearpage

\section{Introduction}\label{intro-sec}

Dynamic network analysis, i.e.~analysis of networks that change over time, is currently one of the most active topics of research in network science and theory. 
Many modern real-life networks are dynamic in nature, in the sense that the network structure 
undergoes discrete changes over time \cite{KlobasMS23,michailCACM,Santoro11}. 
Here we deal with the discrete-time dynamicity of the network links (edges) over a fixed set of nodes (vertices), according to which edges appear in discrete times and are absent otherwise. 
This concept of dynamic network evolution is given by \emph{temporal graphs}~\cite{kempe00,mertziosMCS19}, 
which are also known by other names such as \emph{evolving graphs} \cite{LeonardiALLM16, Ferreira-MANETS-04}, or \emph{time-varying graphs} \cite{krizanc1}.

\begin{definition}[Temporal Graph]
\label{temp-graph-def} A \emph{temporal graph} is a pair $\mathcal{G}=(G,\lambda)$,
where $G=(V,E)$ is an underlying (static) graph and $\lambda :E\rightarrow 2^\mathbb{N}$ is a \emph{time-labeling} function which assigns to every
edge of $G$ a discrete-time label. 
Whenever $|\lambda(e)|\leq 1$ for every $e\in E$, $\mathcal{G}$ is called a \emph{simple} temporal graph.
\end{definition}

Our focus is on \textit{simple} temporal graphs (in which edges appear only once), as, due to their conceptual simplicity, they offer a fundamental model for temporal graphs and they prove to be good prototypes for studying temporal computational problems. 
More specifically, we consider simple temporal graphs whose edge labels are chosen \textit{uniformly at random} from a very large set of possible labels (e.g.~the label of each edge is chosen uniformly at random within $[1,N]$ where $N\rightarrow \infty$). 
This can be equivalently modeled by choosing the time labels uniformly at random as real numbers in the interval $[0,1]$, which leads to the following definition.

\begin{definition}[Random Simple Temporal Graph]
\label{random-simple-temp-graph-def} A \emph{random simple temporal graph} is a pair $\mathcal{G}=(G,\lambda)$, 
where $G=(V,E)$ is an underlying (static) graph and $\{\lambda(e): e \in E\}$ is a set of independent random variables uniformly distributed within $[0,1]$.    
\end{definition}

Note that, in~\cref{random-simple-temp-graph-def}, the probability that two edges lave equal labels is zero. 
For every $v\in V$ and every time slot $t$, we denote the \emph{appearance
of vertex} $v$ \emph{at time} $t$ by the pair $(v,t)$. 
For $Q\subseteq V$, the \emph{restricted temporal graph} $(G,\lambda)|_Q$ is the temporal graph $(G[Q],\{\lambda(e):e\in E(G[Q])\}$.

In the seminal paper of Casteigts, Raskin, Renken, and Zamaraev \cite{DBLP:conf/focs/CasteigtsRRZ21}, the authors consider a related (essentially equivalent to ours) model 
of random simple temporal graphs based on random permutation of edges. 
They provide a thorough study of the temporal connectivity of such graphs and 
they provide sharp thresholds for temporal reachability. Their work motivated our
research in this paper.

In many applications of temporal graphs, information can naturally only move along edges in a way that respects the ordering of their timestamps (i.e.~time labels). That is, information can only flow along sequences of edges whose time labels are increasing (or non-decreasing). 
Motivated by this fact, most studies on temporal graphs have focused on  ``path-related'' problems, such as e.g.~temporal analogues of distance, diameter, reachability, exploration, and
centrality~\cite{DBLP:conf/focs/CasteigtsRRZ21,KlobasMMNZ23,HeegerHMMNS21,akrida16,Erlebach0K21,MertziosMS19,michailTSP16,akridaTOCS17,EnrightMMZ21,ZschocheFMN20,CasteigtsHMZ21,KlobasMMS22}. 
In these problems, the most fundamental notion is that of a \emph{temporal path} from a vertex $u$ to a vertex $v$, which is a path from $u$ to $v$ such that the time labels of the time labels of the edges are increasing (or at least non-decreasing) in the direction from $u$ to $v$. 
To complement this direction, several attempts have been recently made to define meaningful ``non-path'' temporal graph problems which appropriately 
model specific applications. 
Some examples include temporal cliques, cluster editing, temporal vertex cover, temporal graph coloring, temporally transitive orientations of temporal graphs~\cite{viardCliqueTCS16,himmel17,BHMMNS18,ChenMSS18,AkridaMSZ20,HammComplexity22,MertziosMZ21,yu2013algorithms,ghosal2015channel,MertziosMRSZ21}.

What is common to most of the path-related problems is that their extension from static to temporal graphs often follows easily and quite naturally from their static counterparts. 
For example, requiring a graph to be (temporally) connected results in requiring the existence of a (temporal) path among each pair of vertices.
In the case of non-path related problems, the exact definition and its application is not so straightforward. 
For example, defining cliques in a temporal graph as the set of vertices that interact at least once in the lifetime of the graph would be a bit counter intuitive, as two vertices may just interact at the first time step and never again. 
%Therefore we have to find a suitable measure to be able to determine when an interaction has a valid, meaningful impact.
To help with this problem, Viard et al.~\cite{viardCliqueTCS16} introduced the idea of the \emph{sliding time window} of some size $\delta$,
where they define a temporal clique as a set of vertices where in all $\delta$ consecutive time steps each pair of vertices interacts at least once.
There is a natural motivation for this problem, namely to be able to find the contact patterns among high-school students.
Following the idea of Viard et al.~\cite{viardCliqueTCS16}, many other problems on temporal graph were defined wiusing sliding time windows.
For an overview of recent works on sliding windows in temporal graphs, see~\cite{KMS-MFCS23}.

In the next definition we introduce the notion of a $\delta$\textit{-temporal clique} in a random simple temporal graph, and the corresponding maximization problem.

\begin{definition}[$\delta$-\textsc{Temporal Clique}]
%Let $G=(V,E)$ be a graph with $n$ vertices, and let $\{\lambda(e): e \in E\}$ be a set of independent random variables uniformly distributed within $[0,1]$. 
Let $(G,\lambda)$ be a random simple temporal graph with $n$ vertices, let $\delta\in [0,1]$, and let $Q\subseteq V$ be a subset of vertices such that $G[Q]$ is a clique. 
The restricted temporal graph $(G,\lambda)|_Q$ is a $\delta$-temporal clique, if $|\lambda(e)-\lambda(e')|\leq \delta$, for every two edges $e,e'$ which have both their endpoints in $Q$.
\end{definition}

\vspace{0,1cm} \noindent \fbox{ 
\begin{minipage}{0.96\textwidth}
 \begin{tabular*}{\textwidth}{@{\extracolsep{\fill}}lr} \textsc{$\delta$-Temporal Clique} \ \ & \\ \end{tabular*}
 
  \vspace{1.2mm}
{\bf{Input:}}  A simple temporal graph $(G, \lambda)$.\\
{\bf{Output:}} A $\delta$-temporal clique $Q$ of $(G, \lambda)$ with maximum cardinality $|Q|$.
\end{minipage}} \vspace{0,3cm}

\noindent\textbf{Our contribution.} 
In this paper, we consider simple random temporal graphs where the underlying (static) graph is the complete graph on $n$ vertices, and we provide a sharp threshold on the size of  maximum $\delta$-cliques in random instances of this model, for any constant $\delta$. In particular, using the probabilistic method, we prove that the size of a maximum $\delta$-clique is approximately $\frac{2\log{n}}{\log{\frac{1}{\delta}}}$ whp (Theorem \ref{theorem:deltatemporalcliquesize}).~What seems surprising is that, 
even though the random simple temporal graph contains $\Theta(n^2)$ overlapping $\delta$-windows, which (when viewed separately) correspond to different random instances of the Erd\H{o}s-R\'{e}nyi model ${\cal G}_{n, \delta}$ (in which edges appear independently with probability $\delta$), the size of the maximum $\delta$-clique and the maximum clique size of the latter are approximately the same. Furthermore, we show that the minimum interval containing a $\delta$-clique is $\delta-o(\delta)$ whp (Theorem \ref{theorem:deltacliqueintervalsize}). We use this result to show that any polynomial time algorithm for \textsc{$\delta$-Temporal Clique} is unlikely to have very large probability of success (Theorem \ref{theorem:largesuccessprobability}). Finally, we discuss some open problems related to the average case hardness of \textsc{$\delta$-Temporal Clique} in the general case.

\section{Existence of $\delta$-\textsc{Temporal Clique}}

We begin with a Lemma regarding the joint density function of the minimum and maximum label.

\begin{lemma}
Let $(G, \lambda)$ be a random simple temporal graph, i.e. $G=(V,E)$ is a graph with $m = |E(G)| \geq 2$ edges, and $\{\lambda(e): e \in E\}$ is a set of independent random variables uniformly distributed within $[0,1]$. Let also $X \stackrel{\text{def}}{=} \min\{\lambda(e): e \in E\}$ and $Y \stackrel{\text{def}}{=} \max\{\lambda(e): e \in E\}$. Then the joint density function of $X,Y$ is given by
\begin{equation}
    f_{X,Y}(x,y) = \left\{ \begin{array}{ll}
       m(m-1) (y-x)^{m-2}, & 0\leq x \leq y \leq 1 \\
       0 & otherwise.
    \end{array}\right.
\end{equation}
\end{lemma}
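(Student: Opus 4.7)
The plan is to obtain $f_{X,Y}$ as the mixed partial derivative of the joint cumulative distribution function $F_{X,Y}(x,y) = \Pr[X \leq x, Y \leq y]$, which is the standard route for joint order statistics of i.i.d.\ samples. Throughout I would work on the region $0 \leq x \leq y \leq 1$; outside this triangle the density vanishes for obvious reasons (either $X>Y$, which has probability zero, or one of the labels falls outside $[0,1]$).

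First I would compute $F_{X,Y}(x,y)$ by conditioning on the event $\{Y \leq y\}$. Since the labels are independent and uniform in $[0,1]$, one immediately has $\Pr[Y \leq y] = y^m$ (all $m$ labels must be at most $y$). Splitting this event according to whether $X \leq x$ or $X > x$ gives
\begin{equation*}
\Pr[Y \leq y] = \Pr[X \leq x,\, Y \leq y] + \Pr[X > x,\, Y \leq y].
\end{equation*}
The last probability equals $\Pr[\text{every label lies in }(x,y]] = (y-x)^m$, again by independence. Therefore
\begin{equation*}
F_{X,Y}(x,y) = y^m - (y-x)^m
\end{equation*}
on the relevant triangle.

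Next I would recover the density via $f_{X,Y}(x,y) = \frac{\partial^2}{\partial x\, \partial y} F_{X,Y}(x,y)$. Differentiating first in $y$ gives $m y^{m-1} - m(y-x)^{m-1}$; differentiating then in $x$ eliminates the term depending only on $y$ and yields $m(m-1)(y-x)^{m-2}$, which is exactly the claimed expression. A brief sanity check is that integrating $m(m-1)(y-x)^{m-2}$ over the triangle $0 \leq x \leq y \leq 1$ returns $1$, and that the formula reduces to the correct single-variable marginals (e.g.\ $f_Y(y) = m y^{m-1}$).

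There is no real obstacle here; the only point requiring care is making sure one is on the correct region $0 \leq x \leq y \leq 1$ when differentiating, since the formula $y^m-(y-x)^m$ is only valid there and one should not attempt to differentiate $F_{X,Y}$ through the diagonal $x=y$. As an alternative (or cross-check), one could argue combinatorially: in an infinitesimal rectangle $[x,x+dx]\times[y,y+dy]$ the joint density counts the $m(m-1)$ ordered choices of which label realises the minimum and which realises the maximum, each contributing density $1$ by uniformity, multiplied by the probability $(y-x)^{m-2}$ that the remaining $m-2$ labels fall in $[x,y]$; this gives the same formula and could be included as a short remark after the CDF computation.
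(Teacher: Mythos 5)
Your proposal is correct and takes essentially the same route as the paper: both reduce to the observation that $\Pr(\text{all labels lie in } (x,y]) = (y-x)^m$ and then take the mixed partial derivative (the paper differentiates $\Pr(X\ge x, Y\le y)$ directly with a minus sign, while you pass through the joint CDF $y^m-(y-x)^m$, which is the same computation since the $y^m$ term dies under $\partial_x$).
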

\begin{proof}
For $0\leq x \leq y \leq 1$, we have that $\Pr(X \geq x, Y \leq y) = (y-x)^m$. Therefore, for any $0\leq x \leq y \leq 1$
\begin{equation}
f_{X,Y}(x,y) = - \frac{\partial^2}{\partial x \partial y} \Pr(X \geq x, Y \leq y) = - \frac{\partial^2}{\partial x \partial y}(y-x)^m = m(m-1) (y-x)^{m-2}.
\end{equation}
\end{proof}

Similarly, we can prove the following:
\begin{lemma}
Let $(G, \lambda)$ be a random simple temporal graph, i.e. $G=(V,E)$ is a graph with $m = |E(G)|$, and $\{\lambda(e): e \in E\}$ is a set of independent random variables uniformly distributed within $[0,1]$. Let also $X \stackrel{\text{def}}{=} \min\{\lambda(e): e \in E\}$. Then the density function of $X$ is given by
\begin{equation}
    f_{X}(x) = m (1-x)^{m-1}, 0\leq x \leq 1.
\end{equation}
\end{lemma}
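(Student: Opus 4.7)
The plan is to mirror the structure of the preceding lemma's proof, but now working with a single order statistic rather than a pair of them, so the calculation collapses to one line instead of a mixed partial derivative.

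First, I would compute the tail probability $\Pr(X \geq x)$ for $x \in [0,1]$ by exploiting the independence of the edge labels. Since each $\lambda(e)$ is uniform on $[0,1]$, we have $\Pr(\lambda(e) \geq x) = 1-x$, and the event $\{X \geq x\}$ is precisely the intersection $\bigcap_{e \in E} \{\lambda(e) \geq x\}$. By independence of $\{\lambda(e) : e \in E\}$, this yields
\begin{equation}
\Pr(X \geq x) = \prod_{e \in E} \Pr(\lambda(e) \geq x) = (1-x)^m.
\end{equation}

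Next, I would recover the density from this survival function. Writing $F_X(x) = \Pr(X \leq x) = 1 - (1-x)^m$ for $x \in [0,1]$, differentiation gives
\begin{equation}
f_X(x) = \frac{d}{dx} F_X(x) = -\frac{d}{dx}(1-x)^m = m(1-x)^{m-1},
\end{equation}
which is exactly the claimed formula on $[0,1]$.

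There is essentially no obstacle here: the argument is a one-variable analogue of the computation already performed in the previous lemma, and the only minor care needed is a sign check when differentiating $(1-x)^m$ and a note that, at the boundaries $x=0$ and $x=1$, the formula still agrees with the limiting values of the density (namely $m$ and $0$, respectively). No further probabilistic tools are required; the statement follows directly from independence together with the definition of the minimum.
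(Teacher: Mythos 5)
Your proof is correct and is exactly the one the paper intends: the paper omits the argument with the remark ``Similarly, we can prove the following,'' meaning the one-variable analogue of the preceding lemma's computation, which is precisely what you carry out (compute $\Pr(X\geq x)=(1-x)^m$ by independence and differentiate).
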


We now prove the following auxiliary Lemma, which gives an exact formula for the probability that a graph $H$ appears as a subgraph within a $\delta$-window. 

\begin{lemma} \label{lem:subgraph-delta-lives}
Let $(G, \lambda)$ be a random simple temporal graph, i.e. $G=(V,E)$ is a graph, and $\{\lambda(e): e \in E\}$ is a set of independent random variables uniformly distributed within $[0,1]$. Let also $H=(V(H),E(H))$ be a (not necessarily induced) subgraph of $G$, i.e. $H=G[V(H)]$, with $h=|E(H)|$ edges. For any $\delta \in [0,1]$, we have 
\begin{equation}
    \Pr\left(|\lambda(e)-\lambda(e')| \leq \delta, \forall e,e'\in E(H) \right) = h \delta^{h-1}(1-\delta)+\delta^h.
\end{equation}
\end{lemma}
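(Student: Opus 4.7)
The plan is to apply the joint-density lemma above to $H$ itself: since the edge labels are independent uniform on $[0,1]$ for the whole of $E(G)$, restricting to the $h$ edges of $H$ simply gives $h$ iid uniform random variables on $[0,1]$, so the preceding lemmas both apply with $m=h$. Setting $X=\min\{\lambda(e):e\in E(H)\}$ and $Y=\max\{\lambda(e):e\in E(H)\}$, the event that all pairs of edges of $H$ have labels within $\delta$ of each other coincides exactly with $\{Y-X\leq\delta\}$, so it remains to compute $\Pr(Y-X\leq\delta)$.

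I would do this by conditioning on $X$ rather than by integrating the joint density directly, as this yields cleaner bookkeeping. By the density lemma for the minimum, $f_X(x)=h(1-x)^{h-1}$; conditional on $X=x$, the remaining $h-1$ labels are iid uniform on $[x,1]$, so the conditional probability that they all lie in $[x,\min(x+\delta,1)]$ equals $\bigl(\min(\delta,1-x)/(1-x)\bigr)^{h-1}$. Splitting the integration at $x=1-\delta$ then gives
\[
\Pr(Y-X\leq\delta) \;=\; \int_0^{1-\delta} h(1-x)^{h-1}\cdot\frac{\delta^{h-1}}{(1-x)^{h-1}}\,dx \;+\; \int_{1-\delta}^{1} h(1-x)^{h-1}\,dx,
\]
where in the second integral the conditional probability collapses to $1$ because once $x>1-\delta$ the window $[x,x+\delta]$ already contains $[x,1]$. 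Evaluating the two integrals produces $h\delta^{h-1}(1-\delta)$ and $\delta^h$ respectively, summing to the claimed expression.

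The only subtle point is the boundary case $x>1-\delta$, where the $\delta$-window overflows $[0,1]$; combinatorially this contribution equals $\Pr(X\geq 1-\delta)=\delta^h$, i.e.\ the probability that all $h$ labels happen to land in the final window of length $\delta$. I expect this case split to be the main (and essentially only) obstacle; once it is isolated the rest is a routine one-variable integration. As a sanity check I would verify the formula at $\delta=1$ (giving $1$) and at $h=2$ (giving $2\delta-\delta^2$, matching the elementary calculation for two uniform variables). An alternative route would be to integrate $f_{X,Y}(x,y)=h(h-1)(y-x)^{h-2}$ directly over the strip $\{0\leq x\leq y\leq 1,\ y-x\leq\delta\}$, split at $x=1-\delta$ in the same way; both methods produce the same answer.
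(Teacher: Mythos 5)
Your proof is correct and follows essentially the same route as the paper: both condition on the value of the minimum label and split the integral at $x=1-\delta$ to handle the overflow of the $\delta$-window, the only difference being that the paper realizes the density $f_X(x)=h(1-x)^{h-1}$ implicitly by summing over the $h$ disjoint choices of which edge attains the minimum, whereas you invoke it directly. The resulting integrals and the final bookkeeping are identical.
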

\begin{proof}
We assume without loss of generality that $h>0$, and fix $e_0 \in E(H)$. For any $\delta \in [0,1]$, we have
\begin{eqnarray}
&& \Pr\left(|\lambda(e)-\lambda(e')| \leq \delta, \forall e,e'\in E(H), \textrm{ and } \lambda(e_0) < \lambda(e), \forall e \in E(H)\backslash e_0  \right) \nonumber \\
&& \quad = \int_0^1 \Pr\left( \lambda(e) \in (x, \max\{x+\delta,1\}], \forall e \in E(H)\backslash e_0 | \lambda(e_0)=x \right) dx \\
&& \quad = \int_0^{1-\delta} \Pr\left( \lambda(e) \in (x, x+\delta], \forall e \in E(H)\backslash e_0 | \lambda(e_0)=x \right) dx \nonumber \\
&& \qquad + \int_{1-\delta}^1 \Pr\left( \lambda(e) \in (x, 1], \forall e \in E(H)\backslash e_0 | \lambda(e_0)=x \right) dx \\
&& \quad = \int_0^{1-\delta} \delta^{h-1} dx + \int_{1-\delta}^1 (1-x)^{h-1} dx \\
&& \quad = \delta^{h-1}(1-\delta)+\frac{1}{h} \delta^h.
\end{eqnarray}
The proof is completed by taking the union over all $e_0 \in E(H)$. 
\end{proof}

The following Theorem is a direct consequence of Lemma \ref{lem:subgraph-delta-lives} (for $h=\binom{k}{2}$) and linearity of expectation.
\begin{theorem} \label{thm:firstmoment}
Let $(K_n, \lambda)$ be a random simple temporal graph where the underlying graph is the complete graph with $n$ vertices. For any $\delta \in [0,1]$, and $k \in [n]$, the expected number of $\delta$-temporal cliques of size $k$ in $(K_n, \lambda)$ is
\begin{displaymath}
\binom{n}{k} \left(\binom{k}{2} \delta^{\binom{k}{2}-1}(1-\delta)+ \delta^{\binom{k}{2}} \right).    
\end{displaymath}
\end{theorem}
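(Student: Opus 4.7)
The plan is to apply linearity of expectation over all $k$-vertex subsets, using Lemma \ref{lem:subgraph-delta-lives} to evaluate the per-subset probability exactly. First, for each $Q \subseteq V$ with $|Q| = k$, I would define an indicator random variable $X_Q$ which takes value $1$ if the restricted temporal graph $(K_n,\lambda)|_Q$ is a $\delta$-temporal clique, and $0$ otherwise. The number of $\delta$-temporal cliques of size $k$ in $(K_n,\lambda)$ is then $N_k = \sum_{Q \subseteq V,\, |Q|=k} X_Q$, so by linearity,
\begin{equation*}
\mathbb{E}[N_k] = \sum_{Q \subseteq V,\, |Q|=k} \Pr(X_Q = 1).
\end{equation*}

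Next, I would observe that for each such $Q$, the induced subgraph $K_n[Q]$ is itself a complete graph on $k$ vertices, hence it has exactly $h = \binom{k}{2}$ edges. The event $\{X_Q = 1\}$ is precisely the event that $|\lambda(e) - \lambda(e')| \leq \delta$ for all pairs of edges $e, e'$ with both endpoints in $Q$, which matches the hypothesis of Lemma \ref{lem:subgraph-delta-lives} applied to the subgraph $H = K_n[Q]$. Applying the lemma with $h = \binom{k}{2}$ gives
\begin{equation*}
\Pr(X_Q = 1) = \binom{k}{2}\, \delta^{\binom{k}{2}-1}(1-\delta) + \delta^{\binom{k}{2}}.
\end{equation*}

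Finally, since this probability depends only on $k$ and not on the particular choice of $Q$, and since there are exactly $\binom{n}{k}$ choices of $Q$, the desired formula follows immediately by summation. There is no real obstacle here: the only subtlety worth noting is that the probability given by Lemma \ref{lem:subgraph-delta-lives} is identical for every $k$-subset because all induced subgraphs $K_n[Q]$ are isomorphic and the labels are i.i.d., so the sum collapses to $\binom{n}{k}$ copies of the same quantity.
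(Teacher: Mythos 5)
Your proof is correct and follows exactly the route the paper intends: the paper derives this theorem as a direct consequence of Lemma \ref{lem:subgraph-delta-lives} with $h=\binom{k}{2}$ together with linearity of expectation over the $\binom{n}{k}$ vertex subsets, which is precisely your argument spelled out in full.
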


By simple calculations and Theorem \ref{thm:firstmoment}, we get the following:
\begin{lemma}[First moment] \label{lemma:firstmoment}
Let $(K_n, \lambda)$ be a random simple temporal graph where the underlying graph is the complete graph with $n$ vertices, and let $\delta \in (0,1)$ be a constant. For any integer $k \in [n]$, let $X^{(k)}$ denote the number of $\delta$-temporal cliques of size $k$ in $(K_n,\lambda)$. Define $k_0 \stackrel{\text{def}}{=} \frac{2 \log{n}}{\log{\frac{1}{\delta}}}$. For any constant $\epsilon>0$ that can be arbitrarily small, we have
\begin{description}
    \item[(i)] $\mathbb{E}[X^{((1+\epsilon)k_0)}] \to 0$, and
    \item[(ii)] $\mathbb{E}[X^{((1-\epsilon)k_0)}] \to \infty$,
\end{description}
as $n \to \infty$. 
\end{lemma}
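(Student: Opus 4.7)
By Theorem~\ref{thm:firstmoment},
$\mathbb{E}[X^{(k)}] = \binom{n}{k}\bigl(\binom{k}{2}\delta^{\binom{k}{2}-1}(1-\delta) + \delta^{\binom{k}{2}}\bigr)$.
The plan is to reduce this to an analysis of $\binom{n}{k}\delta^{\binom{k}{2}}$ up to a factor polynomial in $k$, and then to take logarithms. As a first step, I bound the bracketed quantity by
$\delta^{\binom{k}{2}} \leq \binom{k}{2}\delta^{\binom{k}{2}-1}(1-\delta) + \delta^{\binom{k}{2}} \leq \bigl(\binom{k}{2}/\delta + 1\bigr)\delta^{\binom{k}{2}}$.
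Since $\delta$ is a fixed constant and $k = \Theta(\log n)$ in the relevant regime, both bounds differ from $\delta^{\binom{k}{2}}$ by a factor that is at most $\mathrm{poly}(\log n)$; so it is enough to show that $\binom{n}{k}\delta^{\binom{k}{2}}$ tends to $0$ (respectively to $\infty$) at a rate that dominates this polylogarithmic slack.

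Second, I combine the standard estimates $(n/k)^k \leq \binom{n}{k} \leq (en/k)^k$ with the identity $\delta^{\binom{k}{2}} = n^{-k(k-1)/k_0}$, which is immediate from $\delta^{k_0} = n^{-2}$. Taking natural logarithms I obtain
\begin{equation*}
\log\!\Bigl(\binom{n}{k}\delta^{\binom{k}{2}}\Bigr) \;=\; k\log n \cdot \frac{k_0 - k + 1}{k_0} \;-\; k\log k \;+\; O(k),
\end{equation*}
where the implicit constant in the $O(k)$ term depends only on whether one uses the upper or lower bound on $\binom{n}{k}$.

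Finally, I substitute the two relevant values of $k$. For $k = (1+\epsilon)k_0$ one has $k_0 - k + 1 = -\epsilon k_0 + 1$, which produces a leading contribution of $-\epsilon(1+\epsilon) \cdot 2(\log n)^2/\log(1/\delta)$; this dominates both $k\log k = O(\log n \cdot \log\log n)$ and the polylogarithmic slack from the first step, so $\mathbb{E}[X^{((1+\epsilon)k_0)}] \to 0$. For $k = (1-\epsilon)k_0$ one has $k_0 - k + 1 = \epsilon k_0 + 1$, giving a leading $+\epsilon(1-\epsilon) \cdot 2(\log n)^2/\log(1/\delta)$, which again dominates the $k\log k$ correction and the polylogarithmic slack, so $\mathbb{E}[X^{((1-\epsilon)k_0)}] \to \infty$. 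I do not anticipate a substantive obstacle: this is a routine first-moment calculation of the same flavour as the one bounding the clique number of $\mathcal{G}(n,p)$ with constant $p$. The only minor point requiring care is keeping track of the polynomial-in-$k$ factors and confirming that $k\log k$ is of lower order than the $(\log n)^2$ terms, which is automatic because $k_0 = \Theta(\log n)$ and $\delta$ is constant.
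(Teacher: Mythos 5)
Your proposal is correct and follows essentially the same route as the paper: both start from Theorem~\ref{thm:firstmoment}, absorb the bracketed factor into a $\mathrm{poly}(k)$ slack, apply $(n/k)^k \leq \binom{n}{k} \leq (en/k)^k$, and identify the dominant $\pm\Theta((\log n)^2)$ term in the exponent that overwhelms the $O(k\log k)$ corrections. Your normalization via $\delta^{k_0}=n^{-2}$ is a cosmetic repackaging of the paper's exponent $k\ln n - \tfrac{k^2}{2}\ln\tfrac{1}{\delta} - \Theta(k\ln k)$, not a different argument.
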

\begin{proof}
By Theorem \ref{thm:firstmoment}, for any $\delta \in [0,1)$, the expected number of $\delta$-temporal cliques of size $k$ in $(K_n, \lambda)$ is 

\begin{eqnarray}
\mathbb{E}[X^{(k)}] & = & \binom{n}{k} \left(\binom{k}{2} \delta^{\binom{k}{2}-1}(1-\delta)+ \delta^{\binom{k}{2}} \right) \nonumber \\
& \leq & \binom{n}{k} \binom{k}{2} \delta^{\binom{k}{2}-1} \leq \left( \frac{ne}{k}\right)^k  k^2 \delta^{\binom{k}{2}-1} \nonumber \\
& = & \exp\left\{k \ln{n} - \frac{k^2}{2} \ln{\frac{1}{\delta}} - \Theta(k \ln{k}) \right\}. \label{eq:6}
\end{eqnarray}
In particular, since $k_0 = \Theta(\log{n})$, for any constant $\delta \in [0,1)$, the RHS of (\ref{eq:6}) goes to 0, for any $k \geq (1+\epsilon)k_0$, as $n \to \infty$. This proves part (i). 

For part (ii), we note that similar  calculations leading to (\ref{eq:6}) can also be used to prove a lower bound (except for a different constant hidden in the $\Theta$ term in the exponent). In particular, 
\begin{eqnarray}
\mathbb{E}[X^{(k)}] & \geq & \left( \frac{n}{k}\right)^k  \left(\frac{k}{2}\right)^2 \delta^{\binom{k}{2}-1} (1-\delta) = \exp\left\{k \ln{n} - \frac{k^2}{2} \ln{\frac{1}{\delta}} - \Theta(k \ln{k}) \right\}, \label{eq:7}
\end{eqnarray}
which goes to $\infty$, for any $k \leq (1-\epsilon)k_0$, as $n \to \infty$.
\end{proof}

The following lemma concerns the variance of the number of $\delta$-temporal cliques.

\begin{lemma}[Second moment] \label{lemma:secondmoment}
Let $(K_n, \lambda)$ be a random simple temporal graph where the underlying graph is the complete graph with $n$ vertices, and let $\delta \in (0,1)$ be a constant. For any integer $k \in [n]$, let $X^{(k)}$ denote the number of $\delta$-temporal cliques of size $k$ in $(K_n,\lambda)$. Define $k_0 \stackrel{\text{def}}{=} \frac{2 \log{n}}{\log{\frac{1}{\delta}}}$. For any constant $\epsilon>0$ that can be arbitrarily small, we have
$\frac{\mathbb{E}\left[\left( X^{(k)}\right)^2 \right]}{\mathbb{E}^2\left[X^{(k)} \right]} \to 1$, for any $k \leq (1-\epsilon)k_0$, as $n \to \infty$.
\end{lemma}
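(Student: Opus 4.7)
The plan is to apply the second-moment method in the standard clique-counting way. I would first expand the second moment over ordered pairs of $k$-subsets grouped by intersection size $j=|Q\cap Q'|$:
\begin{equation*}
\mathbb{E}[(X^{(k)})^{2}]\;=\;\sum_{j=0}^{k}\binom{n}{k}\binom{k}{j}\binom{n-k}{k-j}\,p_{k,j},
\end{equation*}
where $p_{k,j}:=\Pr(E_{Q}\cap E_{Q'})$ and $E_{Q}$ denotes the event that $G[Q]$ is a $\delta$-temporal clique. For $j\in\{0,1\}$ the subgraphs $G[Q]$ and $G[Q']$ share no edges, so $p_{k,j}=p_{k}^{2}$ by independence; the combined contribution of these two terms to $\mathbb{E}[(X^{(k)})^{2}]/\mathbb{E}[X^{(k)}]^{2}$ equals $\binom{n-k}{k}/\binom{n}{k}+k\binom{n-k}{k-1}/\binom{n}{k}=1-O(k^{2}/n)\to 1$ since $k=O(\log n)$. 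It therefore remains to show that the $j\ge 2$ tail of the ratio tends to~$0$.

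For $j\ge 2$ the two cliques share $s=\binom{j}{2}$ edges; I would condition on the vector $\mathbf{X}$ of these $s$ labels, with spread $U=\max\mathbf{X}-\min\mathbf{X}$. Since the non-shared edges of $G[Q]$ and of $G[Q']$ give two disjoint sets of iid uniform labels, the events $E_{Q}$ and $E_{Q'}$ are conditionally independent given $\mathbf{X}$; hence, writing $q(\mathbf{x}):=\Pr(E_{Q}\mid\mathbf{X}=\mathbf{x})$ and $t:=\binom{k}{2}-s$, we have $p_{k}=\mathbb{E}[q(\mathbf{X})]$ and $p_{k,j}=\mathbb{E}[q(\mathbf{X})^{2}]$. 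A case analysis analogous to the proof of \cref{lem:subgraph-delta-lives}, splitting on which of the $t$ non-shared labels fall below, inside, or above the interval $[\min\mathbf{x},\max\mathbf{x}]$, yields in the bulk regime the closed form
\begin{equation*}
q(\mathbf{x})\;=\;(t+1)\,\delta^{t}-t\,\delta^{t-1}\,U\qquad\text{on }\{U\le\delta\},
\end{equation*}
and $q(\mathbf{x})=0$ on $\{U>\delta\}$; boundary corrections affect only an $O(\delta)$ fraction of the integration range and contribute at lower order.

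I would then compute $p_{k,j}=\mathbb{E}[q^{2}]$ by integrating $((t+1)\delta^{t}-t\delta^{t-1}U)^{2}$ against the density $g_{s}(u)=s(s-1)u^{s-2}(1-u)$ of the spread of $s$ iid uniforms on $[0,1]$ (with the trivial $U\equiv 0$ case when $s=1$), and use the moments $\mathbb{E}[U^{\alpha};U\le\delta]$. After expansion and cancellation among the three resulting terms, the leading-order computation gives
\begin{equation*}
\frac{p_{k,j}}{p_{k}^{2}}\;\le\;\frac{C(k)}{\delta^{\binom{j}{2}-1}}
\end{equation*}
for a factor $C(k)$ that is at most polynomial in $k$; this is a factor of $\delta$ sharper than the analogous Erd\H{o}s--R\'enyi bound $\delta^{-\binom{j}{2}}$. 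Using $\binom{k}{j}\binom{n-k}{k-j}/\binom{n}{k}\le (k^{2}/n)^{j}/j!$ together with the identity $n=\delta^{-k_{0}/2}$, the summand for $2\le j\le(1-\epsilon)k_{0}$ becomes at most $C(k)\,k^{2j}\,\delta^{1+j(k_{0}-j+1)/2}/j!$, whose exponent of $\delta$ grows with~$n$, so the sum over this range is $o(1)$. The remaining tail $j>(1-\epsilon)k_{0}$ is handled by the trivial estimate $p_{k,j}\le p_{k}$, which yields $\sum_{j>(1-\epsilon)k_{0}}r_{j}\le \epsilon k_{0}/\mathbb{E}[X^{(k)}]\to 0$ by \cref{lemma:firstmoment}(ii). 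Combined with the $1+o(1)$ contribution from the $j\le 1$ terms, this gives the claim.

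The main technical obstacle is the derivation of the explicit form of $q(\mathbf{x})$ together with the sharp bound $p_{k,j}/p_{k}^{2}=O(\delta^{1-\binom{j}{2}})$. A naive inequality such as $(2\delta-U)^{2t}\le(2\delta)^{2t}$ introduces a spurious factor $4^{\binom{k}{2}}$ that completely overwhelms the desired decay, so one genuinely needs the exact cancellation among the three terms of the expansion of $q(\mathbf{x})^{2}$ after integration against~$g_{s}$; this is where the concentration of $U$ near~$\delta$ (coming from $g_{s}(u)\propto u^{s-2}$ when $s$ is large) is crucially used.
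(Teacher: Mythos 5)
Your proposal is correct, and its overall skeleton (second‑moment method, decomposition of ordered pairs by intersection size $j$, the same final summation over $2\le j\le k$ using $n=\delta^{-k_0/2}$) coincides with the paper's. However, the key technical step --- bounding $p_{k,j}/p_k^2$ --- is done by a genuinely different argument. The paper never conditions on the shared labels: it simply observes that $E_{Q'}$ implies that the $\binom{k}{2}-\binom{j}{2}$ edges of $Q'$ not contained in $Q\cap Q'$ are pairwise within $\delta$, an event independent of $E_Q$, and applies Lemma~\ref{lem:subgraph-delta-lives} with $h=\binom{k}{2}-\binom{j}{2}$ to get $\Pr(E_{Q'}\mid E_Q)\le\binom{k}{2}\delta^{\binom{k}{2}-\binom{j}{2}-1}$, hence $p_{k,j}/p_k^2\le\delta^{-\binom{j}{2}}/(1-\delta)$ in one line. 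You instead condition on the vector of shared labels, exploit conditional independence to write $p_{k,j}=\mathbb{E}[q(\mathbf{X})^2]$, derive the exact bulk formula $q=(t+1)\delta^{t}-t\delta^{t-1}U$ (which I checked is correct, e.g.\ by conditioning on whether the overall minimum is a shared or a fresh label), and integrate against the spread density; using $\mathbb{E}[(\delta-U)^\alpha;U\le\delta]\le s(s-1)\delta^{s+\alpha-1}B(\alpha+1,s-1)$ this indeed yields $p_{k,j}/p_k^2=O(\delta^{1-\binom{j}{2}})$, a factor $\delta$ sharper than the paper's bound. What each buys: the paper's route is essentially effortless and entirely sidesteps the ``main technical obstacle'' you describe (the spurious $4^{\binom{k}{2}}$ only arises if one conditions on $\mathbf{X}$ and then bounds crudely, so it is an artifact of your route, not of the problem); your route costs an explicit density computation but gives the exact conditional structure and a marginally stronger estimate, neither of which is needed here since both bounds are dominated by the combinatorial factor $(k^2/n)^j$. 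Two minor points: your ``remaining tail $j>(1-\epsilon)k_0$'' is vacuous since $j\le k\le(1-\epsilon)k_0$ (and the bound $\epsilon k_0/\mathbb{E}[X^{(k)}]$ as written ignores the pair counts, so it is fortunate the range is empty); and the remark on boundary corrections should be phrased as the observation that truncation at $0$ and $1$ only decreases $q$, so the displayed expression is a valid pointwise upper bound for $\mathbb{E}[q^2]$, while the lower bound on $p_k$ comes from Lemma~\ref{lem:subgraph-delta-lives} exactly.
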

\begin{proof}
Set $k=(1-\epsilon)k_0$, and note that, by part (ii) of Lemma \ref{lemma:firstmoment}, we have $\mathbb{E}[X^{(k)}] = \omega(1)$. Let $S_1, S_2, \ldots, S_{\binom{n}{k}}$ be an arbitrary enumeration of all cliques of size $k$ in $K_n$. For any $i \in \{1, 2, \ldots, \binom{n}{k}\}$, denote by $X_i$ the indicator random variable that is equal to 1 if $S_i$ is a $\delta$-temporal clique in $(K_n, \lambda)$ and 0 otherwise. In particular, we have $X^{(k)} = \sum_{i=1}^{\binom{n}{k}} X_i$ and so $\left( X^{(k)}\right)^2 = \sum_{i,j} X_i X_j$. Taking expectations we get
\begin{eqnarray}
\mathbb{E}\left[\left( X^{(k)}\right)^2 \right] & = & \sum_{i,j} \mathbb{E}[X_i X_j] \nonumber \\
& = & \sum_{t = 0}^{k} \sum_{i,j: |S_i \cap S_j|=t} \mathbb{E}[X_i X_j] \nonumber \\
& = & \sum_{t = 0}^{k} \sum_{i,j: |S_i \cap S_j|=t} \mathbb{E}[X_i] \mathbb{E}[X_j| X_i=1] \nonumber \\
& = & \sum_{t = 0}^{k} \sum_{i,j: |S_i \cap S_j|=t} \mathbb{E}[X_i] \mathbb{E}[X_j| X_i=1] \label{eq:secondmoment}
\end{eqnarray}

Observe that, by independence, the terms in the above sum corresponding to $t=0$ are equal to $\sum_{i,j: |S_i \cap S_j|=\emptyset} \mathbb{E}[X_i] \mathbb{E}[X_j] = \binom{n}{k} \binom{n-k}{k} \left( \binom{k}{2} \delta^{\binom{k}{2}-1} (1-\delta) + \delta^{\binom{k}{2}} \right)^2$, where in the last equation we applied Lemma \ref{lem:subgraph-delta-lives} with $h=\binom{k}{2}$. In particular, we have 
\begin{eqnarray}
\frac{\sum_{i,j: |S_i \cap S_j|=\emptyset} \mathbb{E}[X_i] \mathbb{E}[X_j]}{\mathbb{E}^2\left[X^{(k)} \right]} & = & \frac{\binom{n-k}{k}}{\binom{n}{k}} = \frac{(n-k) \cdots (n-2k+1)}{n \cdots (n-k+1)} \nonumber \\
& = & \left( 1 - \frac{k}{n} \right) \cdots \left( 1 - \frac{k}{n-k+1} \right) = 1-o(1), \label{eq:termt=k}
\end{eqnarray}
where the last equation follows from the fact $k= o(n)$. Furthermore, it is easy to see that the terms in the sum of the RHS of (\ref{eq:secondmoment}) corresponding to $t=k$ is
equal to $\mathbb{E}[X^{(k)}] = o(\mathbb{E}^2[X^{(k)}])$, since $k=(1-\epsilon)k_0$ and so $\mathbb{E}[X^{(k)}]$ goes to $\infty$ as $n \to \infty$. Therefore, by (\ref{eq:secondmoment}) and (\ref{eq:termt=k}),
\begin{equation}
\frac{\mathbb{E}\left[\left( X^{(k)}\right)^2 \right]}{\mathbb{E}^2\left[X^{(k)} \right]} = 1+o(1)+ \frac{\sum_{t = 1}^{k-1} \sum_{i,j: |S_i \cap S_j|=t} \mathbb{E}[X_i] \mathbb{E}[X_j| X_i=1]}{\mathbb{E}^2\left[X^{(k)} \right]}. \label{eq:secondmomentfinal}
\end{equation}

In view of the above, in what follows we will show that the sum in the RHS of the above equation is $o(1)$, which will prove the theorem. To this end, notice that $\mathbb{E}[X_j| X_i=1]$ is equal to the probability that $S_j$ is a $\delta$-temporal clique given that $S_i$ is a $\delta$-temporal clique. By independence of label choices, when $|S_i \cap S_j|=t$ (i.e., $S_i$ intersects with $S_j$ on $t$ vertices), $\mathbb{E}[X_j| X_i=1]$ is upper bounded by the probability that all labels of edges that have both endpoints in $S_j$ but not both endpoints in $S_i \cap S_j$ are at most $\delta$ apart. Therefore, by applying Lemma \ref{lem:subgraph-delta-lives} with $h=\binom{k}{2} - \binom{t}{2}$, we get 
\begin{eqnarray} 
\mathbb{E}[X_j| X_i=1] & \leq & \left( \binom{k}{2} - \binom{t}{2} \right) \delta^{\binom{k}{2} - \binom{t}{2}-1} (1-\delta) + \delta^{\binom{k}{2} - \binom{t}{2}} \nonumber \\
& \leq & \binom{k}{2} \delta^{\binom{k}{2} - \binom{t}{2}-1}. \label{ineq:condprobbound}
\end{eqnarray}
Using the bound (\ref{ineq:condprobbound}), and noting that $\mathbb{E}[X_i] = \binom{k}{2} \delta^{\binom{k}{2}-1} (1-\delta) + \delta^{\binom{k}{2}}$, for any $i \in \{1, 2, \ldots, \binom{n}{k}\}$, we get
\begin{eqnarray}
&& \frac{\sum_{t = 1}^{k-1} \sum_{i,j: |S_i \cap S_j|=t} \mathbb{E}[X_i] \mathbb{E}[X_j| X_i=1]}{\mathbb{E}^2\left[X^{(k)} \right]}
\nonumber \\
&& \qquad \leq \frac{\sum_{t = 1}^{k-1} \sum_{i,j: |S_i \cap S_j|=t} \left( \binom{k}{2} \delta^{\binom{k}{2}-1} (1-\delta) + \delta^{\binom{k}{2}} \right) \binom{k}{2} \delta^{\binom{k}{2} - \binom{t}{2}-1}}{\binom{n}{k}^2 \left( \binom{k}{2} \delta^{\binom{k}{2}-1} (1-\delta) + \delta^{\binom{k}{2}} \right)^2}
\nonumber \\
&& \qquad = \frac{\sum_{t = 1}^{k-1} \binom{n}{k} \binom{k}{t} \binom{n-k}{k-t}  
\binom{k}{2} \delta^{\binom{k}{2} - \binom{t}{2}-1} }{\binom{n}{k}^2 \left( \binom{k}{2} \delta^{\binom{k}{2}-1} (1-\delta) + \delta^{\binom{k}{2}} \right)} \nonumber \\
&& \qquad  \leq \sum_{t = 1}^{k-1} \frac{ \binom{k}{t} \binom{n-k}{k-t} \binom{k}{2} \delta^{\binom{k}{2} - \binom{t}{2}-1} }{\binom{n}{k} \binom{k}{2} \delta^{\binom{k}{2}-1} (1-\delta)} = \sum_{t = 1}^{k-1} \frac{ \binom{k}{t} \binom{n-k}{k-t} }{\binom{n}{k} \delta^{\binom{t}{2}} (1-\delta)} \nonumber \\ 
&& \qquad \leq \sum_{t = 1}^{k-1} \frac{ \frac{k^t e^t}{t^t} \frac{n^{k-t} e^{k-t}}{(k-t)^{k-t}} }{ \frac{n^k}{k^k} \delta^{\binom{t}{2}} (1-\delta)} = \sum_{t = 1}^{k-1} \frac{k^{k+t} e^k}{t^t (k-t)^{k-t} n^t \delta^{\binom{t}{2}} (1-\delta)}. \label{ineq:lastbound}
\end{eqnarray}
By straightforward analysis on the function $g(t) = \frac{k^{k-t}}{t^t (k-t)^{k-t}}$, for $t \in [1,k-1]$, we get that it is strictly decreasing and so it is maximized at $t=1$, namely $\max\{g(t), 1 \leq t \leq k-1\}=g(1)= \left( 1 - \frac{1}{k} \right)^{k-1} \leq \frac{1}{e}$. Therefore, by (\ref{ineq:lastbound}),
\begin{eqnarray}
\frac{\sum_{t = 1}^{k-1} \sum_{i,j: |S_i \cap S_j|=t} \mathbb{E}[X_i] \mathbb{E}[X_j| X_i=1]}{\mathbb{E}^2\left[X^{(k)} \right]}
& \leq & \sum_{t = 1}^{k-1} \frac{1}{e(1-\delta)} \frac{k^{2t} e^k}{n^t \delta^{\binom{t}{2}}} \nonumber \\
& = & \sum_{t = 1}^{k-1} \frac{1}{e(1-\delta)} e^{2t \ln{k}+k - t \ln{n} + \frac{t (t-1)}{2} \ln{\frac{1}{\delta}}} \nonumber \\
& \leq & \sum_{t = 1}^{k-1} \frac{1}{e(1-\delta)} e^{2t \ln{k}+k - t \ln{n} + t \frac{k \ln{\frac{1}{\delta}}}{2} } \nonumber \\
& = & \sum_{t = 1}^{k-1} \frac{1}{e(1-\delta)} e^{2t \ln{k}+k - \epsilon t \ln{n}} =o(1),
\end{eqnarray}
where in the last equality we used the fact that $k=(1-\epsilon) \frac{2 \ln{n}}{ \ln{\frac{1}{\delta}}}$ and $\ln{k} = o(\ln{n})$. This completes the proof.
\end{proof}

By the probabilistic method, combining Lemma \ref{lemma:firstmoment} and Lemma \ref{lemma:secondmoment}, we get the following:
\begin{theorem} \label{theorem:deltatemporalcliquesize}
Let $(K_n, \lambda)$ be a random simple temporal graph where the underlying graph is the complete graph with $n$ vertices, and let $\delta \in (0,1)$ be a constant. Define $k_0 \stackrel{\text{def}}{=} \frac{2 \log{n}}{\log{\frac{1}{\delta}}}$. As $n\to \infty$ we have the following:
\begin{description}
    \item[(i)] With high probability, $(K_n,\lambda)$ has no $\delta$-temporal clique of size $(1+o(1)) k_0$.
    \item[(ii)] With high probability, $(K_n,\lambda)$ contains a $\delta$-temporal clique of size $(1-o(1)) k_0$.
\end{description}
\end{theorem}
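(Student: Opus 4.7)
The plan is to derive both parts of Theorem \ref{theorem:deltatemporalcliquesize} as direct consequences of the first and second moment methods, applied to the random variables $X^{(k)}$ counting $\delta$-temporal cliques of size $k$. Fix an arbitrarily small constant $\epsilon > 0$ and write $k^+ = \lceil (1+\epsilon)k_0 \rceil$ and $k^- = \lfloor (1-\epsilon)k_0 \rfloor$. Since each $X^{(k)}$ is integer-valued, $X^{(k)} \geq 1$ iff $(K_n,\lambda)$ contains a $\delta$-temporal clique of size exactly $k$. I will prove the two directions separately and then upgrade the ``for every $\epsilon$'' conclusions to the $(1\pm o(1))k_0$ form by a standard diagonal argument.

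For part (i), I would apply Markov's inequality. By Lemma \ref{lemma:firstmoment}(i), $\mathbb{E}[X^{(k^+)}] \to 0$, hence
\begin{equation*}
\Pr\left(X^{(k^+)} \geq 1\right) \;\leq\; \mathbb{E}[X^{(k^+)}] \;\to\; 0.
\end{equation*}
To conclude that \emph{no} $\delta$-temporal clique of size at least $k^+$ exists whp, I would invoke the trivial monotonicity: any $k^+$-vertex subset of a larger $\delta$-temporal clique is itself a $\delta$-temporal clique, since the condition $|\lambda(e)-\lambda(e')|\leq \delta$ passes to subgraphs. So the existence of any $\delta$-temporal clique of size $\geq k^+$ implies $X^{(k^+)} \geq 1$, and the above bound suffices.

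For part (ii), I would apply the Paley--Zygmund inequality (equivalently, one-sided Chebyshev) to $X^{(k^-)}$. By Lemma \ref{lemma:firstmoment}(ii), $\mathbb{E}[X^{(k^-)}] \to \infty$, and by Lemma \ref{lemma:secondmoment}, $\mathbb{E}[(X^{(k^-)})^2]/\mathbb{E}^2[X^{(k^-)}] \to 1$. Hence
\begin{equation*}
\Pr\left(X^{(k^-)} > 0\right) \;\geq\; \frac{\mathbb{E}^2[X^{(k^-)}]}{\mathbb{E}[(X^{(k^-)})^2]} \;\to\; 1,
\end{equation*}
so whp a $\delta$-temporal clique of size $k^-$ exists. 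Finally, since both conclusions hold for every fixed $\epsilon > 0$, a diagonal argument produces a sequence $\epsilon_n \to 0$ along which the high-probability statements persist, yielding the advertised $(1\pm o(1))k_0$ bounds.

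The main obstacle is essentially nonexistent: all the delicate combinatorial work (the exact subgraph-in-$\delta$-window formula of Lemma \ref{lem:subgraph-delta-lives}, the sharp first moment asymptotics, and the dominant-term analysis of the variance sum) has already been carried out in the preceding lemmas. The only point that deserves a brief remark is the uniform-in-$\epsilon$ nature of those bounds, needed to convert the ``for every fixed $\epsilon$'' statements into the $(1 \pm o(1))k_0$ form; this is immediate from inspecting the exponents in (\ref{eq:6}) and (\ref{eq:7}), which depend continuously on $\epsilon$.
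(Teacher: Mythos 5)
Your proposal is correct and follows exactly the route the paper intends: the paper's "proof" is just the one-line remark that the theorem follows "by the probabilistic method, combining Lemma \ref{lemma:firstmoment} and Lemma \ref{lemma:secondmoment}," and you have filled in precisely the standard first-moment (Markov) and second-moment (Paley--Zygmund) steps, together with the correct monotonicity and diagonalization remarks needed to make that one-liner rigorous.
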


We note that the above Theorem implies the following:

\begin{theorem} \label{theorem:deltacliqueintervalsize}
Let $(K_n, \lambda)$ be a random simple temporal graph where the underlying graph is the complete graph with $n$ vertices, and let $\delta \in (0,1)$ be a constant. Let also $k_0 = \frac{2 \log{n}}{\log{\frac{1}{\delta}}}$ and let $Q$ be any $\delta$-temporal clique of size at least $(1-o(1))k_0$. Define the interval $\Delta(Q) \stackrel{def}{=}\left[ \min(\lambda(e): e \in Q), \max(\lambda(e): e \in Q) \right]$. Then $|\Delta(Q)|=\delta - o(\delta)$ whp.
\end{theorem}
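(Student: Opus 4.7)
The plan is to deduce this statement from Theorem \ref{theorem:deltatemporalcliquesize}(i) applied with a slightly smaller value of $\delta$. The key observation is purely definitional: if a $\delta$-temporal clique $Q$ satisfies $|\Delta(Q)| \leq \delta'$ for some $\delta' < \delta$, then every pair of edges of $G[Q]$ has labels within $\delta'$ of each other, so $Q$ is also a $\delta'$-temporal clique. Hence a lower bound on $|\Delta(Q)|$ for large $\delta$-temporal cliques follows for free from an upper bound on the maximum size of a $\delta'$-temporal clique in the same random graph.

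Concretely, I would fix an arbitrary constant $c \in (0,1)$ and set $\delta' := (1-c)\delta$, which is again a constant in $(0,1)$. Applying Theorem \ref{theorem:deltatemporalcliquesize}(i) to $(K_n,\lambda)$ with $\delta'$ in place of $\delta$ yields that, whp, $(K_n,\lambda)$ contains no $\delta'$-temporal clique of size exceeding $(1+o(1))\,k_0'$, where $k_0' := \tfrac{2\log n}{\log(1/\delta')}$. A direct computation gives
\[
\frac{k_0'}{k_0} \;=\; \frac{\log(1/\delta)}{\log(1/\delta) + \log(1/(1-c))},
\]
which is a constant strictly less than $1$, depending only on $c$ and $\delta$. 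In particular $(1+o(1))\,k_0' < (1-o(1))\,k_0$ for all sufficiently large $n$, and therefore on the high-probability event from the theorem no $\delta$-temporal clique $Q$ with $|Q| \geq (1-o(1))k_0$ can have $|\Delta(Q)| \leq \delta'$; otherwise $Q$ would be a $\delta'$-temporal clique of forbidden size. Equivalently, whp every such $Q$ satisfies $|\Delta(Q)| > (1-c)\delta$.

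Since $c>0$ was arbitrary and $|\Delta(Q)|\leq \delta$ trivially, this already shows $|\Delta(Q)|/\delta \to 1$ in probability. To upgrade to the stated ``$\delta-o(\delta)$ whp'' form, I would run a standard diagonalisation: choose an increasing sequence $n_k$ such that for $n \geq n_k$ the above event with $c = 1/k$ has probability at least $1-1/k$, and define $c_n := 1/k$ on each block $[n_k,n_{k+1})$. Then $c_n \to 0$ and $|\Delta(Q)| \geq (1-c_n)\delta = \delta - o(\delta)$ whp for every $\delta$-temporal clique $Q$ of size $(1-o(1))k_0$ simultaneously. The only delicate point in the argument is precisely this quantitative passage from ``fixed $c$ whp'' to ``$c = c_n = o(1)$ whp''; all of the genuine probabilistic content is already packaged inside Theorem \ref{theorem:deltatemporalcliquesize}, so beyond this bookkeeping nothing further needs to be proved.
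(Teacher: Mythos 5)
Your proposal is correct and follows essentially the same route as the paper: both arguments observe that a $\delta$-temporal clique $Q$ with $|\Delta(Q)|\leq\delta'<\delta$ is automatically a $\delta'$-temporal clique, and then invoke Theorem~\ref{theorem:deltatemporalcliquesize}(i) with the smaller constant $\delta'$ to rule out such a clique of size $(1-o(1))k_0$, since $\tfrac{2\log n}{\log(1/\delta')}$ is a constant factor below $k_0$. If anything, your explicit diagonalisation over $c\to 0$ to convert ``fixed constant $c$ whp'' into the stated ``$\delta-o(\delta)$ whp'' is more careful than the paper's proof, which leaves that quantitative passage implicit.
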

\begin{proof}
Since $|\Delta(Q)| \leq \delta$, by definition of a $\delta$-temporal clique, we only need to prove that $|\Delta(Q)| \geq \delta - o(\delta)$. Suppose there is some $\delta' \leq \delta-\epsilon$, for some positive constant $\epsilon$, such that $\Pr(\exists Q: \Delta(Q) \leq \delta') \geq \epsilon$. This would imply that $Q$ is a $\delta'$-temporal clique in $(K_n, \lambda)$, of size $(1-o(1))k_0$. However, Theorem \ref{theorem:deltatemporalcliquesize}, suggests that the largest $\delta'$-temporal clique in $(K_n, \lambda)$ is at most $(1-o(1)) \frac{2 \log{n}}{\log{\frac{1}{\delta'}}}$, which is much smaller than $(1-o(1))k_0$, leading to a contradiction.   
\end{proof}

\begin{note}
The above proofs also work for smaller values of $\delta=o(1)$ (e.g. $\delta=\frac{1}{\log{\log{n}}}$), but not too small (e.g. if $\delta = O(1/n)$ the expected size of a $\delta$-temporal clique becomes constant and concentration results do not hold). 
\end{note}

\section{Average case hardness implications and open problems}

The threshold given in Theorem \ref{theorem:deltatemporalcliquesize} on the size of the maximum $\delta$-clique reveals an interesting connection between simple random temporal graphs $(K_n, \lambda)$ and Erd\H{o}s-R\'{e}nyi random graphs $G_{n,\delta}$. On one hand, notice that, if we only consider edges with labels within a given $\delta$-window, then the corresponding graph is an instance of ${\cal G}_{n,\delta}$, which has maximum clique size asymptotically equal to $k_0 \stackrel{def}{=} \frac{2 \log{n}}{\log{\frac{1}{\delta}}}$ whp. On the other hand, the random simple temporal graph contains $\Theta(n^2)$ different instances of ${\cal G}_{n, \delta}$, but the size of a maximum $\delta$-clique size is asymptotically the same. One explanation why this happens is that the different instance of ${\cal G}_{n, \delta}$ contained in the random simple temporal graph are highly dependent, even if these correspond to disjoint $\delta$-windows (indeed, edges with labels appearing in one window do not appear in the other and vice versa).  

It is therefore interesting to ask whether we can use the above connection algorithmically. One direction is clearly easier than the other: If there is a polynomial time algorithm ${\cal A}_{ER}(\delta)$ that can find a clique of size $q = \Theta(k_0)$ in a random instance of ${\cal G}_{n,\delta}$ whp, then we can use this algorithm to find an asymptotically equally large $\delta$-clique in a random instance of $(K_n, \lambda)$ with the same probability of success. We note that, finding a clique of size asymptotically close to $k_0$ in $G_{n, \delta}$ is believed to be hard in the average case and there is no known algorithm for this problem that runs in polynomial time in $n$. 

For the other direction, we conjecture that the following reduction may be possible: 

\begin{conjecture} \label{conjecture:reduction}
    Suppose that, for any $\delta \in [0,1]$
    there is a polynomial time algorithm ${\cal A}_{SRT}(\delta)$ that finds an $(1-o(1))$-approximation of a maximum $\delta$-clique in a random instance of $(K_n, \lambda)$ whp. Then ${\cal A}_{SRT}(\delta)$ can be used to design a polynomial time algorithm that finds an $(1-o(1))$-approximation of a maximum in ${\cal G}_{n, \delta}$ whp.
\end{conjecture}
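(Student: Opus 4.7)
The plan is to establish Conjecture~\ref{conjecture:reduction} via a \emph{planting} reduction followed by a boosting/post-processing step. Given a random instance $G \sim {\cal G}_{n,\delta}$, construct labels $\lambda$ by drawing $\lambda(e)\sim U[0,\delta]$ for every edge $e\in E(G)$ and $\lambda(e)\sim U(\delta,1]$ for every non-edge $e\in E(K_n)\setminus E(G)$. A straightforward density computation shows that, after marginalising over $G$, the vector $\{\lambda(e)\}_e$ is i.i.d.\ uniform on $[0,1]$, so $(K_n,\lambda)$ is a genuine random simple temporal graph and a legitimate input to $\mathcal{A}_{SRT}(\delta)$. Running the algorithm yields, with high probability, a $\delta$-temporal clique $Q^*$ of size $(1-o(1))k_0$. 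If $\min_{e\in Q^*}\lambda(e)=0$, i.e.\ $Q^*$ lies in the window $[0,\delta]$, then by construction every edge of $Q^*$ belongs to $G$, producing a clique in $G$ of the desired size.

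The main obstacle I anticipate is to force $Q^*$ to lie in the planted window. By Theorem~\ref{theorem:deltacliqueintervalsize} the interval $W^*=[\min_{e\in Q^*}\lambda(e),\max_{e\in Q^*}\lambda(e)]$ has width $\delta-o(\delta)$, so $Q^*$ is effectively localised at some sliding window; however, each of the $\Theta(n^2)$ overlapping $\delta$-windows of $(K_n,\lambda)$ defines, after marginalising over $G$, a ${\cal G}_{n,\delta}$-distributed subgraph with the same expected maximum clique size, so a priori $\mathcal{A}_{SRT}$ has no reason to prefer the planted window over any other. An adversarial instantiation of $\mathcal{A}_{SRT}$ could systematically return a $Q^*$ whose edges are mostly non-edges of $G$, in which case direct extraction would fail.

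To circumvent this, I would combine the planting with two additional ingredients. First, randomise the position of the planting by drawing $c\sim U[0,1-\delta]$ and placing $G$ inside $[c,c+\delta]$ rather than in $[0,\delta]$, then accept $Q^*$ only when $W^*$ overlaps substantially with $[c,c+\delta]$. Second, apply a post-processing stage that restricts $Q^*$ to the largest subset whose induced edges all lie in $G$, for instance by iteratively removing the vertex with the largest number of non-$G$ edges within $Q^*$. Repeating this procedure polynomially many times on independent samples of $\lambda$ (and independent shifts $c$) should amplify the probability that at least one invocation yields a clique of $G$ of size $(1-o(1))k_0$.

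The hard part that remains is the quantitative argument that, with non-negligible probability, the post-processed clique still has size $(1-o(1))k_0$. Making this precise would require controlling how the optimiser produced by $\mathcal{A}_{SRT}$ distributes across the $\Theta(n^2)$ overlapping $\delta$-windows, most likely through an anti-concentration statement for the location of $W^*$ or a correlation-decay argument between nearby windows that exploits the strong dependencies already highlighted in the paragraph introducing the conjecture. Turning either of these ideas into a rigorous statement appears to be the decisive step, and in my view is the main reason why the implication has been formulated only as a conjecture.
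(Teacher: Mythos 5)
This statement is stated in the paper only as a conjecture; the paper offers no proof of it, and your proposal does not close the gap either -- you say so yourself in your final paragraph. What you have written is a proof \emph{plan} whose decisive step is left open, so it cannot be accepted as a proof of the implication.

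That said, your plan coincides almost exactly with the route the paper itself sketches. Your planted instance (labels of $E(G)$ drawn i.u.a.r.\ from $[0,\delta]$, all other labels i.u.a.r.\ from $(\delta,1]$) is precisely the instance ${\cal I}(G_{n,\delta})$ described in the paragraph preceding Conjecture~\ref{conjecture:uarcliqueinterval}, and your observation that the marginal over $G$ is a genuine random simple temporal graph is correct and is the reason this construction is preferable to the cruder planting used in Theorem~\ref{theorem:largesuccessprobability} (which pays an $\exp(-\Theta(n^2))$ conditioning cost and therefore only yields a conclusion under the very strong hypothesis of that theorem). The missing anti-concentration statement you identify -- controlling where the window $W^*=\Delta(Q^*)$ of the returned clique lands -- is exactly what the paper isolates as Conjecture~\ref{conjecture:uarcliqueinterval}. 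So you have correctly located the obstruction, but you have not overcome it.

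Two concrete weaknesses in the parts you do flesh out. First, the post-processing step (iteratively deleting vertices of $Q^*$ incident to non-$G$ edges) cannot rescue a window that only \emph{partially} overlaps the planted window: if the overlap fraction is $\alpha<1$ bounded away from $1$, then roughly an $\alpha$-fraction of the $\binom{|Q^*|}{2}$ edges of $Q^*$ lie in $G$, and the largest sub-clique of $Q^*$ consisting entirely of $G$-edges behaves like a maximum clique in a dense random graph on $|Q^*|=\Theta(\log n)$ vertices, i.e.\ has size $O(\log\log n)\ll (1-o(1))k_0$. So ``overlaps substantially'' must mean ``overlaps up to $o(\delta)$'', which by Theorem~\ref{theorem:deltacliqueintervalsize} forces $W^*$ to essentially coincide with the planted window -- an event whose probability per trial is exactly what the unproved anti-concentration statement would have to lower-bound by an inverse polynomial. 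Second, the amplification-by-repetition step resamples $\lambda$ while keeping $G$ fixed; the resulting instances are correlated through $G$, and the ``whp'' guarantee of ${\cal A}_{SRT}$ is over the joint distribution, so you would additionally need to argue that failure events across trials do not conspire on a fixed $G$ of non-negligible probability. Neither issue is fatal to the plan, but both must be resolved before the conjecture can be considered proved.
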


It is clear that the probability of success of ${\cal A}_{SRT}(\delta)$ in the above Conjecture cannot be equal to 1 unless $P=NP$. In the following Theorem we also prove that the probability of success is unlikely to be too large.

\begin{theorem} \label{theorem:largesuccessprobability}
Suppose that, for any constant $\delta \in (0,1)$, the probability of success of algorithm ${\cal A}_{SRT}(\delta)$ is $1-\exp(-\omega(n^2))$. Then ${\cal A}_{SRT}(\delta/2)$ can be used to find a clique of size $(1-o(1))k_0$ in ${\cal G}_{n, \delta}$ whp.
\end{theorem}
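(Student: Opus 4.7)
The plan is an importance-sampling (change-of-measure) argument that crucially exploits the strength of the hypothesised failure probability $\exp(-\omega(n^2))$, which is small enough to absorb a pointwise density ratio of order $\exp(O(n^2))$ between two natural distributions on simple temporal graphs.

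Given an input $G\sim {\cal G}_{n,\delta}$, I would construct a random labeling $\lambda$ of the edges of $K_n$ by drawing $\lambda(e)\sim U[0,\delta/2]$ independently for every $e\in E(G)$ and $\lambda(e)\sim U[\delta/2,1]$ independently for every $e\notin E(G)$. Call the resulting distribution ${\cal D}_{bias}$, and the uniform random simple temporal graph distribution ${\cal D}_{unif}$. Under ${\cal D}_{bias}$ the set $\{e:\lambda(e)\le\delta/2\}$ is exactly the input $G$, and the cliques of $G$ are precisely the $\delta/2$-temporal cliques of $(K_n,\lambda)$ whose labels all lie in $[0,\delta/2]$. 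A pointwise comparison of densities gives $\mathrm{d}{\cal D}_{bias}/\mathrm{d}{\cal D}_{unif}\le 2^{\binom{n}{2}}=\exp(O(n^2))$ everywhere (the worst case being the labeling whose every label lies in $[0,\delta/2]$). Combined with the hypothesised failure probability $\exp(-\omega(n^2))$ of ${\cal A}_{SRT}(\delta/2)$ under ${\cal D}_{unif}$, a standard importance-sampling estimate yields failure probability at most $\exp(O(n^2))\cdot\exp(-\omega(n^2))=\exp(-\omega(n^2))=o(1)$ under ${\cal D}_{bias}$.

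Next, I would pin down the maximum $\delta/2$-temporal clique under ${\cal D}_{bias}$. The time window $[0,\delta/2]$ contains exactly the ER subgraph ${\cal G}_{n,\delta}$, so by a standard first/second-moment argument (essentially Theorem~\ref{theorem:deltatemporalcliquesize}) the maximum clique within this window has size $(1-o(1))k_0$ whp. For any other $\delta/2$-window $[\alpha,\alpha+\delta/2]$ with $\alpha>0$, the effective per-edge probability of being in the window is strictly less than $\delta$: it equals $\delta-c\alpha$ for some constant $c>0$ when $\alpha\in(0,\delta/2]$, and an $\alpha$-independent constant strictly less than $\delta$ when $\alpha\in(\delta/2,1-\delta/2]$. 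A first-moment calculation analogous to Lemma~\ref{lemma:firstmoment} adapted to the biased labels, together with a union bound over a sufficiently fine discretisation of $\alpha$, then shows that the maximum $\delta/2$-temporal clique of $(K_n,\lambda)$ under ${\cal D}_{bias}$ has size $(1+o(1))k_0$ whp and is essentially realised within $[0,\delta/2]$.

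Combining the two points, with probability $1-o(1)$ the algorithm ${\cal A}_{SRT}(\delta/2)$ returns a $\delta/2$-temporal clique $Q$ of size $(1-o(1))k_0$ in $(K_n,\lambda)$; by the structural analysis above, after removing at most $o(k_0)$ vertices of $Q$ that cover the few edges with label $>\delta/2$, we obtain a clique of $G\sim {\cal G}_{n,\delta}$ of size $(1-o(1))k_0$. The main technical obstacle in the plan is the fine first-moment calculation for \emph{mixed} $\delta/2$-cliques whose windows straddle $\delta/2$: the unconditional number of such cliques is of the same order as that of purely-$G$-supported ones, and one has to argue that any mixed $\delta/2$-clique of size $(1-o(1))k_0$ contains only $o(k_0)$ non-$G$ edges, so that trimming produces a $G$-clique of the desired size.
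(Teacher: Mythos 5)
Your overall strategy is the same as the paper's: plant $G\sim{\cal G}_{n,\delta}$ in the initial sub-window, observe that the planted distribution has probability (or density ratio) $\exp(-O(n^2))$ relative to the uniform model, and let the hypothesised $\exp(-\omega(n^2))$ failure probability absorb that factor. Your importance-sampling phrasing of this last step is fine and, if anything, cleaner than the paper's conditioning argument. The genuine gap is exactly the one you flag at the end and do not close: because you place the non-$G$ labels on $[\delta/2,1]$, windows of the form $[\alpha,\alpha+\delta/2]$ with small $\alpha>0$ mix $G$-edges and non-$G$-edges, and your plan requires showing both that the returned $(1-o(1))k_0$-clique lives in such a window with small $\alpha$ \emph{and} that its non-$G$ edges can be eliminated by deleting $o(k_0)$ vertices. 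The second part does not follow from the non-$G$ edges being an $o(1)$ \emph{fraction} of the clique's edges: a clique on $k$ vertices with $\epsilon k^2$ bad edges need not have a vertex cover of the bad edges of size $o(k)$ unless $\epsilon=o(1/k)$, so you would need $\alpha=o(\delta/k_0)$, which your first-moment sketch over a discretisation of $\alpha$ does not deliver. Moreover the algorithm is only guaranteed to return \emph{some} near-maximum $\delta/2$-temporal clique, so you cannot choose which window it lands in.

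The paper sidesteps all of this with a one-line modification of the construction: the non-$G$ edges receive labels in $[\delta,1]$ rather than $[\delta/2,1]$, leaving the interval $(\delta/2,\delta)$ empty. Then almost surely no $\delta/2$-window contains both a $G$-label and a non-$G$-label, so every $\delta/2$-temporal clique is either entirely a clique of $G$ or entirely supported on labels in $[\delta,1]$; by Theorem~\ref{theorem:deltatemporalcliquesize} the latter cannot reach size $(1-o(1))k_0$, hence whatever ${\cal A}_{SRT}(\delta/2)$ returns is a clique of $G$ with no trimming needed. The density-ratio bound changes only by a constant in the exponent (each label's density is at most $\max\{2/\delta,1/(1-\delta)\}$ relative to uniform, still $\exp(O(n^2))$ overall), so the rest of your argument goes through verbatim. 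With that substitution your proof is correct; as written, the mixed-window analysis is an unresolved and genuinely nontrivial obligation.
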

\begin{proof}
Let $G_{n,\delta}$ be a random instance of the Erd\H{o}s-R\'{e}nyi random graphs model. 
Let ${\cal E}$ be the event that, for a random instance of the random simple temporal graph $(K_n, \lambda)$, all edges of $G_{n, \delta}$ appear inside $[0,\delta/2]$, while all other edges appear inside $[\delta, 1]$. By definition we then have that
\begin{equation}
\Pr({\cal E}) = \left( \frac{\delta}{2}\right)^{|E(G_{n, \delta})|} (1-\delta)^{\binom{n}{2}-|E(G_{n, \delta})|} \geq \left(\min\{\delta/2, 1-\delta\}\right)^{n^2} \geq \left(\frac{\delta(1-\delta)}{2}\right)^{n^2}.
\end{equation}
Furthermore, notice that, by Theorem \ref{theorem:deltacliqueintervalsize}, given ${\cal E}$, any maximum $\frac{\delta}{2}$-clique belongs to $G_{n,\delta}$ and thus lies within $\left[ 0, \frac{\delta}{2}\right]$ whp. Indeed, a maximum clique of $G_{n,\delta}$ has size asymptotically equal to $k_0$, and there is no $\frac{\delta}{2}$-clique of size $(1-o(1)) k_0$ within $[\delta,1]$, whp. In particular, denoting by ${\cal C}$ the event that the maximum $\frac{\delta}{2}$-clique lies within $[0,\delta/2]$, we have
\begin{equation}
\Pr({\cal E} \cap {\cal C}) = \Pr({\cal E})\Pr({\cal C} | {\cal E}) \geq (1-o(1))\left(\frac{\delta(1-\delta)}{2}\right)^{n^2} = \exp(-\Theta(n^2)). \label{eq:ECbound}   
\end{equation}

In view of the above, given $G_{n, \delta}$, we construct the input instance for ${\cal A}_{SRT}$ as follows: (a) select the labels of edges in $E(G_{n, \delta})$ uniformly at random within $[0,\delta/2]$ and (b) select the labels of all other edges independently, uniformly at random within $[\delta, 1]$. 

Since the failure probability of ${\cal A}_{SRT}(\delta/2)$ is, by assumption, at most $\exp(-\omega(n^2))$ (which is asymptotically smaller than the lower bound given in (\ref{eq:ECbound})),
the algorithm may only fail to find a $\frac{\delta}{2}$-temporal clique of size $(1-o(1)) k_0$ in a vanishing fraction of input instances ${\cal I}$ created. In particular, the $\frac{\delta}{2}$-temporal clique constructed by ${\cal A}_{SRT}(\delta/2)$ will be an $(1-o(1))k_0$ clique of $G_{n, \delta}$ whp, as needed.
\end{proof}

In view of the above proof, one possible approach for Conjecture \ref{conjecture:reduction} would be to examine the distribution of the minimum interval containing all labels of edges in a $\delta$-clique found by ${\cal A}_{SRT}$ (namely 
$\Delta(Q) \stackrel{def}{=}\left[ \min(\lambda(e): e \in Q), \max(\lambda(e): e \in Q) \right]$), when the input instance ${\cal I}= {\cal I}(G_{n, \delta})$ is constructed as follows: Given $G_{n,\delta}$, (1) select labels $\{\lambda(e): e \in E(G_{n,\delta})\}$ independently, uniformly at random (i.u.a.r.) within $[0,\delta]$, and (2) select the rest of the labels $\{\lambda(e): e \notin E(G_{n,\delta})\}$ i.u.a.r. from $[\delta,1]$. In particular, we conjecture the following:

\begin{conjecture} \label{conjecture:uarcliqueinterval}
    Let $Q^*$ be the $\delta$-clique constructed by algorithm ${\cal A}_{SRT}$, on instance ${\cal I}(G_{n,\delta})$. Then $\Delta(Q^*)$ is distributed almost uniformly at random within $[0,1]$.
\end{conjecture}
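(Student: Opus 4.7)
The plan starts from the observation that, when all randomness is accounted for (the random Erd\H{o}s-R\'enyi draw of $G_{n,\delta}$ together with the two restricted uniform label assignments), the instance ${\cal I}(G_{n,\delta})$ has exactly the same joint distribution as a random simple temporal graph $(K_n, \lambda)$ from Definition~\ref{random-simple-temp-graph-def}. Under the construction each edge $e$ independently belongs to $E(G_{n,\delta})$ with probability $\delta$ and then receives a label with density $1/\delta$ on $[0,\delta]$; otherwise its label has density $1/(1-\delta)$ on $[\delta,1]$. Integrating out the Bernoulli trial gives marginal density $1$ on $[0,1]$ for each $\lambda(e)$, and independence across edges is preserved by the construction; hence the joint law is that of i.i.d.\ $\mathrm{Unif}[0,1]$ labels. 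Consequently Conjecture~\ref{conjecture:uarcliqueinterval} is equivalent to the statement that, on a freshly drawn random simple temporal graph, the output interval $\Delta({\cal A}_{SRT}(K_n,\lambda))$ is approximately uniformly distributed in $[0,1]$.

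To argue the latter, I would exploit the translation invariance of the label law on the circle $\mathbb{R}/\mathbb{Z}$. For each $s \in [0,1)$ the shift $T_s:\lambda(e)\mapsto \lambda(e)+s \pmod 1$ preserves the law of $(K_n,\lambda)$. I would replace ${\cal A}_{SRT}$ by its symmetrization $\tilde{\cal A}_{SRT}$, which draws $s\sim\mathrm{Unif}[0,1)$ internally, runs ${\cal A}_{SRT}$ on $T_s(X)$, and returns the resulting vertex set. By $T_s$-invariance of the input law, $\tilde{\cal A}_{SRT}$ has the same marginal success probability as ${\cal A}_{SRT}$; and on the circle the output interval distribution is uniform by construction, since a uniformly shifted interval of length $<1$ on $\mathbb{R}/\mathbb{Z}$ is circularly uniform. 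Theorem~\ref{theorem:deltacliqueintervalsize} then guarantees $|\Delta(\tilde Q^*)| = \delta - o(\delta)$ whp, so only a $\delta$-fraction of shifts cause the interval to straddle the boundary $\{0\}\equiv\{1\}$; outside this boundary zone, circular uniformity translates into uniformity in $[0,1-\delta]$, so the left endpoint of $\Delta(\tilde Q^*)$ is uniform in $[0,1-\delta]$ up to a correction supported on a set of shifts of measure at most $\delta-o(\delta)$.

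The main obstacle lies in two places. First, the conjecture names the original ${\cal A}_{SRT}$, not its symmetrization, and transferring the uniformity conclusion requires showing that any ${\cal A}_{SRT}$ satisfying the hypothesis of Conjecture~\ref{conjecture:reduction} must already be approximately translation-equivariant in distribution. Heuristically this is plausible — the problem itself is shift-invariant on the circle, so an asymmetric algorithm exploits no genuine structural property and can be replaced by $\tilde{\cal A}_{SRT}$ with no loss — but making this rigorous likely requires a two-sample coupling argument comparing the output distributions of ${\cal A}_{SRT}(X)$ and ${\cal A}_{SRT}(T_s(X))$. Second, for constant $\delta$ the $O(\delta)$ boundary correction is itself a constant rather than $o(1)$; sharpening "approximately uniform within $[0,1]$" to $o(1)$-error in total variation would need a tail argument showing that a $(1-o(1))k_0$-sized $\delta$-clique rarely has minimum or maximum label within $o(\delta)$ of $\{0,1\}$, which should follow from the explicit density formulas of the preliminary lemmas combined with the second-moment bound of Lemma~\ref{lemma:secondmoment}.
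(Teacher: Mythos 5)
The statement you are trying to prove is explicitly left as an open conjecture in the paper: the authors give no proof of Conjecture~\ref{conjecture:uarcliqueinterval}, so there is no argument of theirs to compare yours against. Your opening observation is correct and worth keeping: marginalizing the Bernoulli$(\delta)$ membership of each edge against the two restricted uniform densities $1/\delta$ on $[0,\delta]$ and $1/(1-\delta)$ on $[\delta,1]$ gives density $1$ on all of $[0,1]$, independently across edges, so ${\cal I}(G_{n,\delta})$ is distributed exactly as a random simple temporal graph. This identity is presumably the heuristic behind the conjecture, and stating it cleanly is a genuine contribution of your write-up.

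The rest of the argument does not close, and the obstruction is more severe than the two caveats you list. First, the conjecture as stated quantifies over whichever algorithm ${\cal A}_{SRT}$ satisfies the hypothesis of Conjecture~\ref{conjecture:reduction}, and for an arbitrary such algorithm the conclusion is simply false: an algorithm that searches only for $\delta$-cliques whose labels lie in $[0,1/2]$ still finds a $(1-o(1))$-approximation whp (by Theorem~\ref{theorem:deltatemporalcliquesize} applied to the restriction, since $k_0$ does not depend on the length of the ambient window as long as it is a constant), yet its output interval is never in $[1/2,1]$. So no symmetrization argument can transfer uniformity from $\tilde{\cal A}_{SRT}$ back to ${\cal A}_{SRT}$ without adding an equivariance hypothesis; the ``approximate translation-equivariance'' you hope to derive does not follow from the success guarantee alone. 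Second, the circular shift $T_s$ preserves the law of the labels but not the set of $\delta$-temporal cliques: a clique whose shifted labels straddle $0\equiv 1$ is a $\delta$-clique on the circle but not on $[0,1]$, so on a constant fraction of shifts (constant, since $\delta$ is a constant) $\tilde{\cal A}_{SRT}$ either returns something that is not a $\delta$-clique of the original instance or must be given a different input than a genuine sample of the model; either way the claim that it ``has the same marginal success probability'' fails. What you have is a correct proof that the \emph{symmetrized} algorithm, viewed on the circle $\mathbb{R}/\mathbb{Z}$ with the circular notion of $\delta$-clique, has a uniformly positioned output interval --- a true but essentially tautological statement that does not address the conjecture about ${\cal A}_{SRT}$ itself.
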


%If Conjecture \ref{conjecture:uarcliqueinterval} is true, then we can prove Conjecture \ref{conjecture:reduction} as follows: Run algorithm ${\cal A}$ several times on independent instances constructed as ${\cal I}(G_{n,\delta})$, but repeating steps 1-3 independently for each one. Then, because of Conjecture \ref{conjecture:uarcliqueinterval}, after a constant number of times, ${\cal A}$ will output a maximum $\delta$-clique $Q^*$ with $|\Delta(Q^*) \cap \Delta_{\delta}| = \delta-o(\delta)$, where $\Delta_{\delta}$ is the $\delta$-interval chosen in step 1 to contain labels from edges in $G_{n,\delta}$. Therefore, we will have an almost maximum clique for $G_{n, \delta}$.   


\begin{thebibliography}{10}
	
	\bibitem{krizanc1}
	E.~Aaron, D.~Krizanc, and E.~Meyerson.
	\newblock {DMVP:} foremost waypoint coverage of time-varying graphs.
	\newblock In {\em Proceedings of the 40th International Workshop on
		Graph-Theoretic Concepts in Computer Science ({WG})}, pages 29--41, 2014.
	
	\bibitem{akrida16}
	Eleni~C. Akrida, Leszek Gasieniec, George~B. Mertzios, and Paul~G. Spirakis.
	\newblock Ephemeral networks with random availability of links: The case of
	fast networks.
	\newblock {\em Journal of Parallel and Distributed Computing}, 87:109--120,
	2016.
	
	\bibitem{akridaTOCS17}
	Eleni~C. Akrida, Leszek Gasieniec, George~B. Mertzios, and Paul~G. Spirakis.
	\newblock The complexity of optimal design of temporally connected graphs.
	\newblock {\em Theory of Computing Systems}, 61(3):907--944, 2017.
	
	\bibitem{AkridaMSZ20}
	Eleni~C. Akrida, George~B. Mertzios, Paul~G. Spirakis, and Viktor Zamaraev.
	\newblock Temporal vertex cover with a sliding time window.
	\newblock {\em Journal of Computer and System Sciences}, 107:108--123, 2020.
	
	\bibitem{LeonardiALLM16}
	A.~Anagnostopoulos, J.~Lacki, S.~Lattanzi, S.~Leonardi, and M.~Mahdian.
	\newblock Community detection on evolving graphs.
	\newblock In {\em Proceedings of the 30th Conference on Neural Information
		Processing Systems (NIPS)}, pages 3522--3530, 2016.
	
	\bibitem{BHMMNS18}
	Matthias Bentert, Anne-Sophie Himmel, Hendrik Molter, Marco Morik, Rolf
	Niedermeier, and Ren\'{e} Saitenmacher.
	\newblock Listing all maximal $k$-plexes in temporal graphs.
	\newblock In {\em Proceedings of the 2018 {IEEE/ACM} International Conference
		on Advances in Social Networks Analysis and Mining ({ASONAM})}, pages 41--46,
	2018.
	
	\bibitem{CasteigtsHMZ21}
	Arnaud Casteigts, Anne{-}Sophie Himmel, Hendrik Molter, and Philipp Zschoche.
	\newblock Finding temporal paths under waiting time constraints.
	\newblock {\em Algorithmica}, 83(9):2754--2802, 2021.
	
	\bibitem{ChenMSS18}
	Jiehua Chen, Hendrik Molter, Manuel Sorge, and Ondrej Such{\'{y}}.
	\newblock Cluster editing in multi-layer and temporal graphs.
	\newblock In {\em Proceedings of the 29th International Symposium on Algorithms
		and Computation ({ISAAC})}, volume 123, pages 24:1--24:13, 2018.
	
	\bibitem{EnrightMMZ21}
	Jessica~A. Enright, Kitty Meeks, George~B. Mertzios, and Viktor Zamaraev.
	\newblock Deleting edges to restrict the size of an epidemic in temporal
	networks.
	\newblock {\em Journal of Computer and System Sciences}, 119:60--77, 2021.
	
	\bibitem{Erlebach0K21}
	Thomas Erlebach, Michael Hoffmann, and Frank Kammer.
	\newblock On temporal graph exploration.
	\newblock {\em Journal of Computer and System Sciences}, 119:1--18, 2021.
	
	\bibitem{Ferreira-MANETS-04}
	A.~Ferreira.
	\newblock Building a reference combinatorial model for {MANET}s.
	\newblock {\em IEEE Network}, 18(5):24--29, 2004.
	
	\bibitem{ghosal2015channel}
	Subhankar Ghosal and Sasthi~C Ghosh.
	\newblock Channel assignment in mobile networks based on geometric prediction
	and random coloring.
	\newblock In {\em Proceedings of the 40th {IEEE} Conference on Local Computer
		Networks ({LCN})}, pages 237--240, 2015.
	
	\bibitem{HammComplexity22}
	Thekla Hamm, Nina Klobas, George~B. Mertzios, and Paul~G. Spirakis.
	\newblock The complexity of temporal vertex cover in small-degree graphs.
	\newblock In {\em Proceedings of the 36th {AAAI} Conference on Artificial
		Intelligence ({AAAI})}, pages 10193--10201, 2022.
	
	\bibitem{HeegerHMMNS21}
	Klaus Heeger, Danny Hermelin, George~B. Mertzios, Hendrik Molter, Rolf
	Niedermeier, and Dvir Shabtay.
	\newblock Equitable scheduling on a single machine.
	\newblock In {\em Proceedings of the 35th {AAAI} Conference on Artificial
		Intelligence ({AAAI})}, pages 11818--11825, 2021.
	
	\bibitem{himmel17}
	Anne{-}Sophie Himmel, Hendrik Molter, Rolf Niedermeier, and Manuel Sorge.
	\newblock Adapting the bron-kerbosch algorithm for enumerating maximal cliques
	in temporal graphs.
	\newblock {\em Social Network Analysis and Mining}, 7(1):35:1--35:16, 2017.
	
	\bibitem{kempe00}
	D.~Kempe, J.~M. Kleinberg, and A.~Kumar.
	\newblock Connectivity and inference problems for temporal networks.
	\newblock In {\em Proceedings of the 32nd Annual ACM symposium on Theory of
		computing (STOC)}, pages 504--513, 2000.
	
	\bibitem{KlobasMMNZ23}
	Nina Klobas, George~B. Mertzios, Hendrik Molter, Rolf Niedermeier, and Philipp
	Zschoche.
	\newblock Interference-free walks in time: temporally disjoint paths.
	\newblock {\em Auton. Agents Multi-Agent Syst.}, 37(1), 2023.
	
	\bibitem{KlobasMMS22}
	Nina Klobas, George~B. Mertzios, Hendrik Molter, and Paul~G. Spirakis.
	\newblock The complexity of computing optimum labelings for temporal
	connectivity.
	\newblock In {\em Proceedings of the 47th International Symposium on
		Mathematical Foundations of Computer Science ({MFCS})}, pages 62:1--62:15,
	2022.
	
	\bibitem{KMS-MFCS23}
	Nina Klobas and George B. Mertzios Paul~G. Spirakis.
	\newblock Sliding into the future: Investigating sliding windows in temporal
	graphs.
	\newblock In {\em Proceedings of the 48th International Symposium on
		Mathematical Foundations of Computer Science (MFCS)}, pages 5:1--5:12, 2023.
	
	\bibitem{mertziosMCS19}
	G.B. Mertzios, O.~Michail, I.~Chatzigiannakis, and P.G. Spirakis.
	\newblock Temporal network optimization subject to connectivity constraints.
	\newblock {\em Algorithmica}, pages 1416--1449, 2019.
	
	\bibitem{MertziosMS19}
	George~B. Mertzios, Othon Michail, and Paul~G. Spirakis.
	\newblock Temporal network optimization subject to connectivity constraints.
	\newblock {\em Algorithmica}, 81(4):1416--1449, 2019.
	
	\bibitem{MertziosMRSZ21}
	George~B. Mertzios, Hendrik Molter, Malte Renken, Paul~G. Spirakis, and Philipp
	Zschoche.
	\newblock The complexity of transitively orienting temporal graphs.
	\newblock In {\em Proceedings of the 46th International Symposium on
		Mathematical Foundations of Computer Science ({MFCS})}, pages 75:1--75:18,
	2021.
	
	\bibitem{MertziosMZ21}
	George~B. Mertzios, Hendrik Molter, and Viktor Zamaraev.
	\newblock Sliding window temporal graph coloring.
	\newblock {\em Journal of Computer and System Sciences}, 120:97--115, 2021.
	
	\bibitem{michailCACM}
	O.~Michail and P.G. Spirakis.
	\newblock Elements of the theory of dynamic networks.
	\newblock {\em Communications of the ACM}, 61(2):72--72, January 2018.
	
	\bibitem{michailTSP16}
	Othon Michail and Paul~G. Spirakis.
	\newblock Traveling salesman problems in temporal graphs.
	\newblock {\em Theoretical Computer Science}, 634:1--23, 2016.
	
	\bibitem{Santoro11}
	N.~Santoro.
	\newblock Computing in time-varying networks.
	\newblock In {\em Proceedings of the 13th International Symposium on
		Stabilization, Safety, and Security of Distributed Systems {(SSS)}}, page~4,
	2011.
	
	\bibitem{viardCliqueTCS16}
	Tiphaine Viard, Matthieu Latapy, and Cl{\'{e}}mence Magnien.
	\newblock Computing maximal cliques in link streams.
	\newblock {\em Theoretical Computer Science}, 609:245--252, 2016.
	
	\bibitem{yu2013algorithms}
	Feng Yu, Amotz Bar-Noy, Prithwish Basu, and Ram Ramanathan.
	\newblock Algorithms for channel assignment in mobile wireless networks using
	temporal coloring.
	\newblock In {\em Proceedings of the 16th {ACM} international conference on
		Modeling, analysis \& simulation of wireless and mobile systems ({MSWiM})},
	pages 49--58, 2013.
	
	\bibitem{ZschocheFMN20}
	Philipp Zschoche, Till Fluschnik, Hendrik Molter, and Rolf Niedermeier.
	\newblock The complexity of finding small separators in temporal graphs.
	\newblock {\em Journal of Computer and System Sciences}, 107:72--92, 2020.
	
\end{thebibliography}
\end{document}